\newcommand{\beq}{\begin{equation}}
\newcommand{\eeq}{\end{equation}}
\newcommand{\bea}{\begin{eqnarray}}
\newcommand{\eea}{\end{eqnarray}}
\newcommand{\bean}{\begin{eqnarray*}}
\newcommand{\eean}{\end{eqnarray*}}
\newcommand {\mat}      [1] {\left[\begin{array}{#1}}
\newcommand {\rix}          {\end{array}\right]}
\newcommand\R{{\mathbb{R}}}
\newcommand\C{\mathbb{C}}
\newcommand{\Cn}{\C^{n}}
\newcommand{\Cnn}{\C^{n\times n}}
\newcommand{\si}{^{(i)}}
\newcommand{\sip}{^{(i+1)}}
\newcommand{\gref}[1]{{(\ref{#1})}}
\newcommand{\DS}{\displaystyle}
\newtheorem{assum}[theorem]{Assumption}
\newtheorem{alg}[theorem]{Algorithm}
\newtheorem{exmp}[theorem]{Example}
\newtheorem{rem}[theorem]{Remark}
\title{The calculation of the distance to a nearby defective matrix}
\author{Melina A. Freitag\thanks{Department of Mathematical Sciences,
        University of Bath, Claverton Down, BA2 7AY, United Kingdom (email:{\tt m.freitag@maths.bath.ac.uk}). Corresponding author. This author was supported by GWR, UK.}
\and Alastair Spence\thanks{Department of Mathematical Sciences,
        University of Bath, Claverton Down, BA2 7AY, United Kingdom (email:{\tt as@maths.bath.ac.uk})}.}
\begin{document}

\maketitle

\begin{abstract}
In this paper a new fast algorithm for the computation of the distance of a matrix to a nearby defective matrix is presented. The problem is formulated following Alam \& Bora (Linear Algebra Appl., 396 (2005), pp.~273--301) and reduces to finding when a parameter-dependent matrix is singular subject to a constraint. The solution is achieved by an extension of the Implicit Determinant Method introduced by Spence \& Poulton (J. Comput. Phys., 204 (2005), pp.~65--81). Numerical results for several examples illustrate the performance of the algorithm.
\end{abstract}

\begin{keywords}
nearest defective matrix, sensitivity of eigenproblem, Newton's method.
\end{keywords}

\begin{AMS}
Primary 65F15, 15A18. Secondary 93B60.
\end{AMS}

\pagestyle{myheadings}
\thispagestyle{plain}
\markboth{M. A. FREITAG AND A. SPENCE}{DISTANCE TO A NEARBY DEFECTIVE MATRIX}
\section{Introduction}

Let $A$ be a complex $n\times n$ matrix with $n$ distinct eigenvalues. It is a classic problem in numerical linear algebra to find
\[
d(A) = \inf\{\|A-B\|,\quad B \quad\text{is a defective matrix}\},
\]
where $\|\cdot\| = \|\cdot\|_F$ or $\|\cdot\| = \|\cdot\|_2$. Hence $d(A)$ is the distance of the matrix $A$ to the set of matrices which have a Jordan block of at least dimension $2$. In this paper we present a fast numerical method to find a nearby defective matrix. We formulate the problem as a real $3$-dimensional nonlinear system which is solved by Newton's method. Though not guaranteed to find the nearest defective matrix, since Newton's method provides no such guarantees, in all the examples considered our method did, in fact, find the nearest defective matrix and hence $d(A)$ was computed.

The distance of a matrix to a defective matrix is linked with the sensitivity analysis of eigenvalues. The condition number of a simple eigenvalue $\lambda$ is given by $1/|y^H x|$, (see \cite{Wilkinson65}) where $x$ and $y$ are normalised right and left eigenvectors corresponding to $\lambda$. For a defective eigenvalue we have $y^H x=0$ and hence the condition number of the eigenvalue is infinite. But it is well-known that even if the eigenvalues of a matrix are simple and well-separated from each other, they can be ill-conditioned. Hence the measure of the distance $d(A)$ of a matrix $A$ to a defective matrix $B$ is important for determining the sensitivity of the eigendecomposition. There is a very informative discussion and history of this problem in \cite{ABBO09}, where the contributions of Demmel \cite{Demmel83, Demmel86} and Wilkinson \cite{Wilkinson84, Wilkinson86} are discussed in detail. Another important paper is that by Lippert and Edelman \cite{LippertEdelman99} who use ideas from differential geometry and singularity theory to discuss the sensitivity of double eigenvalues. In particular, they present a condition that measures the ill-conditioning of a matrix with a $2$-dimensional Jordan block. The key paper that provides the solution to the nearest defective matrix is that of Alam and Bora \cite{AlamBora2005} who provide both theory and an algorithm based on pseudospectra.

Following Trefethen and Embree \cite{TrefEmbree05}, the $\varepsilon$-pseudospectrum $\Lambda_{\varepsilon}(A)$ of a matrix $A$ is given by
\[
\Lambda_{\varepsilon}(A) = \{ \sigma_{\min}(A - zI)<\varepsilon\},
\]
where $\varepsilon>0$ and $\sigma_{\min}$ denotes the smallest singular value. Equivalently
\[
\Lambda_{\varepsilon}(A) = \{z\in\C\,|\, \text{det}(A+E-zI)=0,\,\text{for some }\, E\in\Cnn\,\text{with}\, \|E\|<\varepsilon\}.
\]
If $\Lambda_{\varepsilon}(A)$ has $n$ components, then $A+E$ has $n$ distinct eigenvalues for all perturbation matrices $E\in\Cnn$ with $\|E\|<\varepsilon$ and hence $A+E$ is not defective. Alam and Bora \cite{AlamBora2005} take these ideas and seek the smallest perturbation matrix $E$ such that the pseudospectra of $A+E$ coalesce. They present the following theorem (see \cite[Theorem 4.1]{AlamBora2005} and \cite[Lemma 1]{ABBO09}).

\begin{theorem}
\label{th:def}
Let $A\in\Cnn$ and $z\in\C\setminus\Lambda(A)$, so that $A-z I$ has a simple smallest singular value $\varepsilon>0$ with corresponding left and right singular vectors $u$ and $v$ such that $(A-zI)v = \varepsilon u$. Then $z$ is an eigenvalue of $B = A-\varepsilon uv^H$ with geometric multiplicity $1$ and corresponding left and right eigenvectors $u$ and $v$ respectively. Furthermore, if $u^H v = 0$, then $z$ has algebraic multiplicity greater than one, hence it is a nonderogatory defective eigenvalue of $B$ and $\|A-B\|= \varepsilon$.
\end{theorem}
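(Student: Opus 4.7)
The plan is to verify the four conclusions of the theorem in turn, relying throughout on the singular-vector identities
\[
(A-zI)v = \varepsilon u, \qquad u^H(A-zI) = \varepsilon v^H,
\]
together with $\|u\|_2 = \|v\|_2 = 1$ implicit in calling $u,v$ the singular vectors.

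First I would verify that $v$ is a right eigenvector and $u$ a left eigenvector of $B$ at $z$ by direct substitution into $B - zI = (A - zI) - \varepsilon uv^H$. Computing $(B-zI)v$ gives $\varepsilon u - \varepsilon u (v^H v) = 0$, and symmetrically $u^H(B-zI) = \varepsilon v^H - \varepsilon (u^H u) v^H = 0$. This step is routine.

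The more delicate step is geometric multiplicity one. Assuming $(B-zI)w = 0$ I would rewrite it as $(A-zI)w = \varepsilon (v^H w)\, u$. Since $z \notin \Lambda(A)$ the matrix $A - zI$ is invertible, and since $(A-zI)v = \varepsilon u$ the unique pre-image of $\varepsilon u$ is $v$, so $w = (v^H w)\,v$. Hence $\ker(B - zI) = \mathrm{span}\{v\}$. For the algebraic-multiplicity claim under $u^H v = 0$, the natural route is to build a Jordan chain of length two by solving $(B - zI)w = v$. By the Fredholm alternative this equation is solvable iff $v$ is orthogonal to $\ker((B-zI)^H) = \mathrm{span}\{u\}$, i.e. iff $u^H v = 0$; under this hypothesis a $w$ exists that is independent of $v$, so the algebraic multiplicity of $z$ is at least two, and together with geometric multiplicity one this is exactly the statement that $z$ is nonderogatory and defective for $B$. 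Finally, $A - B = \varepsilon u v^H$ is rank one with $\|uv^H\|_F = \|uv^H\|_2 = \|u\|_2\|v\|_2 = 1$, so $\|A-B\| = \varepsilon$ in either the Frobenius or spectral norm.

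The step I expect to require the most care is the geometric-multiplicity argument: one must use both invertibility of $A - zI$ and the uniqueness of $v$ as the pre-image of $\varepsilon u$ under $A - zI$, not merely that $\varepsilon$ is a simple singular value, to rule out a second linearly independent null-vector of $B - zI$. The Fredholm step for the generalised eigenvector, by contrast, is essentially automatic once $u$ has been identified as the left eigenvector.
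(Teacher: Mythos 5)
Your proof is correct, and the overall skeleton (eigenvector verification, geometric multiplicity one, then defectiveness from $u^Hv=0$, then the norm computation) matches the paper's. The two nontrivial steps, however, are argued quite differently. For geometric multiplicity the paper reasons via the SVD: subtracting $\varepsilon uv^H$ annihilates the smallest singular value, and because that singular value is \emph{simple} the rank of $B-zI$ is exactly $n-1$. You instead use invertibility of $A-zI$ to show that any $w$ with $(B-zI)w=0$ satisfies $(A-zI)\bigl(w-(v^Hw)v\bigr)=0$ and hence lies in $\mathrm{span}\{v\}$; this is clean and, notably, does not use simplicity of $\varepsilon$ at all, only $z\notin\Lambda(A)$. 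For the defectiveness claim the paper simply asserts that $u^Hv=0$ forces $z$ to be a multiple eigenvalue (implicitly invoking the classical fact that a simple eigenvalue has non-orthogonal left and right eigenvectors), whereas you construct a length-two Jordan chain explicitly via the Fredholm alternative, using $\ker\bigl((B-zI)^H\bigr)=\mathrm{span}\{u\}$ (which follows by the mirror image of your kernel argument). Your route is more self-contained and constructive; the paper's is shorter but leans on standard facts it does not spell out. Both are valid.
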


\begin{proof}
As $A-zI$ has unique smallest singular value $\varepsilon>0$ with corresponding left and right singular vectors $u$ and $v$, we have $\text{rank}(A-zI-\varepsilon uv^H)=n-1$ and hence $z$ is an eigenvalue of $B =A-\varepsilon uv^H$ with geometric multiplicity $1$. Further
\[
Bv = zv\quad\text{and}\quad u^H B = z u^H,
\]
and $u$ and $v$ are left and right eigenvectors such that $u^H v = 0$. So $z$ is a multiple eigenvalue of $B$ and hence $z$ is a nonderogatory defective eigenvalue of $B$.
\end{proof}

Theorem \ref{th:def} leads to the result $E:= -\varepsilon uv^H$ so that $B = A+E$ is a defective matrix and $d(A)=\varepsilon$, since $v^Hv=u^Hu=1$. One drawback of the algorithm in \cite{AlamBora2005} is that it is rather expensive since it involves repeated calculation of pseudospectra. Also a decision on when two pseudospectral curves coalesce is required. In \cite{ABBO09} a method based on calculating lowest generalised saddle points of singular values is described. This has the advantage that it is able to deal with the nongeneric case when $A-\varepsilon uv^H$ is ill-conditioned. We shall present a straightforward, yet elegant and very fast method that deals with the generic case when  $A-\varepsilon uv^H$ is well-conditioned. 

Using the notation of Theorem \ref{th:def} the problem is to find $z\in\C$, $u,v\in\Cn$ and $\varepsilon\in\R$ such that 
\begin{eqnarray}
(A-zI)v - \varepsilon u=0\label{eq:sv1}\\
\varepsilon v - (A-zI)^Hu=0\label{eq:sv2}
\end{eqnarray}
and
\begin{equation}
\label{eq:sv3}
u^H v = 0.
\end{equation}
Following Theorem \ref{th:def} and Lippert and Edelman \cite[Sections 4 and 5]{LippertEdelman99} we make the following assumption.

\begin{assum}
\label{assum:0}
Assume $A-z I$ satisfies the conditions of Theorem \ref{th:def} and that $B = A-\varepsilon uv^H$ is well-conditioned. That is, with $z=\alpha+\beta i$, the $2\times 2$ matrix $\displaystyle\mat{cc}\varepsilon_{\alpha\alpha}&\varepsilon_{\alpha\beta}\\  \varepsilon_{\alpha\beta}&\varepsilon_{\beta\beta}\rix$ is well-conditioned, where $\varepsilon_{\alpha\alpha}$ denotes the second partial derivative of $\varepsilon$ with respect to $\alpha$, etc. (see \cite[Theorem 5.1 and Corollary 5.2]{LippertEdelman99}).
\end{assum}

The paper is organised as follows. Section \ref{sec:bg} contains some background theory and the derivation of the implicit determinant method for our problem. Section \ref{sec:newton} describes the Newton method applied to this problem and in Section \ref{sec:ex} we give numerical examples that illustrate the power of our approach.

\section{The implicit determinant method to find a nearby defective matrix}
\label{sec:bg}

In this section we describe some background theory and present our numerical approach to finding a nearby defective matrix, which is formulated as solving a $3$-dimensional real nonlinear system. We emphasise that, since our numerical method uses standard Newton's method to solve the nonlinear system, we cannot guarantee to find the nearest defective matrix. However, a more sophisticated nonlinear solver may be used if greater reliability were sought. We do not do this here because in all our examples the nearest defective matrix was found using standard Newton's method. 

First, we formulate the problem following Alam and Bora \cite[Section 4]{AlamBora2005}. Equations \gref{eq:sv1}-\gref{eq:sv2} can be written as 
\beq
\label{eq:full}
\mat{cc}-\varepsilon I&A-zI\\ (A-zI)^H&-\varepsilon I\rix\mat{c}u\\ v\rix = \mat{c}0\\ 0\rix.
\eeq
Set $z = \alpha+i\beta$, $x = \displaystyle\mat{c}u\\ v\rix$ and introduce the Hermitian matrix
\beq
\label{eq:Habe}
K(\alpha,\beta,\varepsilon) = \mat{cc}-\varepsilon I&A-(\alpha +i\beta)I\\ (A-(\alpha +i\beta)I)^H&-\varepsilon I\rix.
\eeq
Clearly, $x$ is both a right and left null vector of $K(\alpha,\beta,\varepsilon)$. 
The following Lemma follows immediately from Assumption \ref{assum:0}.

\begin{lemma}
\label{lem:1}
Let $\varepsilon>0$ satisfy the conditions in Theorem \ref{th:def}. Furthermore, let $z = \alpha+i\beta$ be such that $K(\alpha,\beta,\varepsilon)x=0$, where $x = \displaystyle\mat{c}u\\ v\rix$. Then $\text{dim}\,\text{ker}\, K(\alpha,\beta,\varepsilon) = 1$.
\end{lemma}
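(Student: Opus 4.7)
The plan is to recognise that $K(\alpha,\beta,\varepsilon)$ is essentially the Jordan--Wielandt (or Golub--Kahan) dilation of $A-zI$ shifted by $-\varepsilon I_{2n}$, so its kernel is controlled entirely by the singular value structure of $A-zI$. Concretely, I would write
\[
K(\alpha,\beta,\varepsilon) = J - \varepsilon I_{2n},\qquad J := \mat{cc} 0 & A-zI \\ (A-zI)^H & 0 \rix,
\]
and invoke the standard fact that the eigenvalues of the Hermitian matrix $J$ are exactly $\pm\sigma_i(A-zI)$ for $i=1,\ldots,n$, with eigenvectors $\displaystyle\mat{c} u_i\\ \pm v_i\rix$ built from the corresponding singular vectors, and that the algebraic (hence, by Hermiticity, geometric) multiplicity of a nonzero $\sigma\in\Lambda(J)$ coincides with its multiplicity as a singular value of $A-zI$. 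Under this identification, the given null vector $x = \displaystyle\mat{c}u\\ v\rix$ is nothing other than the singular-vector pair associated with the smallest singular value $\varepsilon$, as dictated by (\ref{eq:sv1})--(\ref{eq:sv2}).

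The next step is to read off $\dim\ker K(\alpha,\beta,\varepsilon) = \dim\ker(J-\varepsilon I_{2n})$ as the multiplicity of $\varepsilon$ as an eigenvalue of $J$, and to verify that it equals the multiplicity of $\varepsilon$ as a singular value of $A-zI$. Two ingredients from Theorem \ref{th:def} enter here. First, $z\notin\Lambda(A)$, so $A-zI$ is nonsingular and every singular value is strictly positive; in particular $-\sigma_j(A-zI) < 0 < \varepsilon$ for all $j$, so the eigenvalue $\varepsilon$ of $J$ cannot arise from a sign-flipped copy $-\sigma_j$ on the negative branch. Second, Theorem \ref{th:def} asserts that $\varepsilon$ is \emph{simple} as the smallest singular value of $A-zI$, so exactly one index $i$ achieves $\sigma_i(A-zI)=\varepsilon$ and its multiplicity is $1$.

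Combining these two facts gives $\dim\ker K(\alpha,\beta,\varepsilon) = 1$, which is the claim. The only point requiring care --- and this is the nearest thing to an obstacle --- is ruling out an accidental coincidence between $\varepsilon$ and some $-\sigma_j$ on the negative branch of $\Lambda(J)$; but this is eliminated at once by the hypothesis $z\notin\Lambda(A)$ which forces $\sigma_j>0$ for every $j$. The whole argument is thus a one-line consequence of the Jordan--Wielandt correspondence together with the conditions collected in Theorem \ref{th:def}, which explains the authors' remark that the lemma follows ``immediately''.
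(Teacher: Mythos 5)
Your proof is correct and is exactly the standard Jordan--Wielandt argument that the paper leaves implicit (the authors give no proof, stating only that the lemma ``follows immediately'' from Assumption \ref{assum:0}, whose content includes the simplicity of $\varepsilon$ as the smallest singular value and $z\notin\Lambda(A)$). Your two observations --- that positivity of all $\sigma_j(A-zI)$ rules out a coincidence with the negative branch $-\sigma_j$, and that simplicity of $\varepsilon$ gives multiplicity one on the positive branch --- are precisely the details needed to make the ``immediate'' claim rigorous.
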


\noindent We now introduce an algorithm to find the critical values of $\alpha$, $\beta$ and $\varepsilon$ such that the Hermitian matrix $K(\alpha,\beta,\varepsilon)$ is singular and the constraint on $x$ given by \gref{eq:sv3} is satisfied. We use the implicit determinant method, introduced in \cite{SpencePoulton05} to find photonic band structure in periodic materials such as photonic crystals. In \cite{FrSp09} the implicit determinant method was used to find a $2$-dimensional Jordan block in a Hamiltonian matrix in order to calculate the distance to instability. Here, in contrast to \cite{SpencePoulton05}, we have a three-parameter problem with a constraint to satisfy, and apply the method to a classic problem in numerical linear algebra. 

First we introduce a bordered matrix $M$. The next theorem gives conditions to ensure that this matrix is nonsingular.

\begin{theorem}
\label{th:abcd}
Let $(\alpha^*,\beta^*,\varepsilon^*,x^*)$ solve 
\[
K(\alpha,\beta,\varepsilon)x = 0,\quad x\neq 0,
\]
so that $\text{dim}\,\text{ker}\,K(\alpha^*,\beta^*,\varepsilon^*) = 1$ and $x^*\in \text{ker}(K(\alpha^*,\beta^*,\varepsilon^*))\setminus\{0\}$. For some $c\in\C^{2n}$ assume 
\[
c^Hx^*\neq 0.
\]
Then the Hermitian matrix 
\beq
\label{eq:M}
M(\alpha,\beta,\varepsilon) = \mat{cc} K(\alpha,\beta,\varepsilon)  &c\\c^H&0\rix
\eeq
is nonsingular at $\alpha = \alpha^*$, $\beta = \beta^*$, $\varepsilon=\varepsilon^*$.
\end{theorem}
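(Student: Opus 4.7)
The plan is to prove $M^* := M(\alpha^*,\beta^*,\varepsilon^*)$ is nonsingular by the standard strategy for bordered matrices: I will show $\ker M^* = \{0\}$, which suffices because $M^*$ is square. The two ingredients I will exploit are (a) $K^* := K(\alpha^*,\beta^*,\varepsilon^*)$ is Hermitian with one-dimensional kernel spanned by $x^*$, and (b) the hypothesis $c^H x^* \neq 0$.

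I would start by assuming $M^*\binom{y}{\gamma} = 0$ for some $y \in \C^{2n}$ and $\gamma \in \C$. Expanding the block equation yields the two conditions
\[
K^* y + \gamma c = 0, \qquad c^H y = 0.
\]
The first equation says $\gamma c = -K^* y \in \mathrm{range}(K^*)$. Because $K^*$ is Hermitian, $\mathrm{range}(K^*) = (\ker K^*)^{\perp} = \{x^*\}^{\perp}$, so $\gamma c \perp x^*$, i.e.\ $\gamma\,\overline{c^H x^*} = 0$. Since $c^H x^* \neq 0$, this forces $\gamma = 0$. The first equation then collapses to $K^* y = 0$, so $y = \mu x^*$ for some scalar $\mu$, and the border condition $c^H y = 0$ becomes $\mu\, c^H x^* = 0$, forcing $\mu = 0$ and hence $y = 0$.

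I do not expect a genuine obstacle. The Hermitian structure of $K$ is what keeps the argument short: the single hypothesis $c^H x^* \neq 0$ simultaneously handles what, for a general (non-Hermitian) matrix with a one-dimensional kernel, would require two separate nondegeneracy conditions, one against the right null vector and one against the left. The whole argument is essentially a one-line sequence of eliminations once the block equation has been expanded.
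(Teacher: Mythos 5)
Your proof is correct and complete. The paper itself does not give a direct argument: it disposes of the theorem by citing Lemma 2.8 of Keller \cite{Keller77}, which is the general bordering lemma for a matrix $K$ with one-dimensional kernel and requires \emph{two} nondegeneracy conditions, one pairing the right border column against the left null vector and one pairing the bottom border row against the right null vector. What you have written out is, in effect, the proof of that lemma specialized to the Hermitian case, and your closing observation is exactly the point: because $K^*$ is Hermitian the left and right null spaces coincide (both spanned by $x^*$) and the border is $(c, c^H)$, so Keller's two conditions collapse to the single hypothesis $c^H x^* \neq 0$. Each step checks out: $\mathrm{range}(K^*) = (\ker K^*)^{\perp} = \{x^*\}^{\perp}$ by Hermitian symmetry, which kills $\gamma$; then $\dim\ker K^* = 1$ and $c^H x^* \neq 0$ kill $y$. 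The trade-off is the usual one: the citation is shorter and invokes a standard result, while your argument is self-contained, elementary, and makes visible precisely where the Hermitian structure and the hypothesis $c^H x^* \neq 0$ are used.
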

\begin{proof}
This result follows from \cite[Lemma 2.8]{Keller77}.
\end{proof}

\noindent As $M(\alpha^*,\beta^*,\varepsilon^*)$ is nonsingular we have that $M(\alpha,\beta,\varepsilon)$ is nonsingular for $\alpha$, $\beta$ and $\varepsilon$ in the vicinity of 
$\alpha^*$, $\beta^*$ and $\varepsilon^*$. Now consider the following linear system
\beq
\label{eq:linsys}
\mat{cc}K(\alpha,\beta,\varepsilon)&c\\ c^H&0\rix\mat{c}x(\alpha,\beta,\varepsilon)\\ f(\alpha,\beta,\varepsilon)\rix=\mat{c}0\\ 1\rix,
\eeq
where $K(\alpha,\beta,\varepsilon)$ is given by \gref{eq:Habe}. Clearly, Theorem \ref{th:abcd} implies that both $x$ and $f$ are smooth functions of $\alpha$, $\beta$ and $\varepsilon$ near $(\alpha^*,\beta^*,\varepsilon^*)$.

Applying Cramer's rule to \gref{eq:linsys} we obtain
\[
f(\alpha,\beta,\varepsilon) = \frac{\text{det}\,K(\alpha,\beta,\varepsilon)}{\text{det}\,M(\alpha,\beta,\varepsilon)},
\]
and as $M(\alpha,\beta,\varepsilon)$ is nonsingular in the neighbourhood of $(\alpha^*,\beta^*,\varepsilon^*)$ by Theorem \ref{th:abcd} there is an equivalence between the zero eigenvalues of $K(\alpha,\beta,\varepsilon)$ (which we are looking for) and the zeros of $f(\alpha,\beta,\varepsilon)$. Hence, to find the values of $\alpha$, $\beta$ and $\varepsilon$ such that $\text{det}\,K(\alpha,\beta,\varepsilon)=0$ we seek the solutions of 
\beq
\label{eq:first}
f(\alpha,\beta,\varepsilon) = 0.
\eeq
If $f(\alpha^*,\beta^*,\varepsilon^*)=0$, the first row of system \gref{eq:linsys} gives
\beq
\label{eq:H}
K(\alpha^*,\beta^*,\varepsilon^*)x(\alpha^*,\beta^*,\varepsilon^*) = 0,
\eeq
that is, $x(\alpha^*,\beta^*,\varepsilon^*)= x^*$ is an eigenvector of $K(\alpha^*,\beta^*,\varepsilon^*)$ belonging to the eigenvalue zero. For the following derivation we use the notation
\beq
\label{eq:xy}
x(\alpha,\beta,\varepsilon) = \mat{c}u(\alpha,\beta,\varepsilon)\\ v(\alpha,\beta,\varepsilon)\rix.
\eeq
Note also that since $K(\alpha,\beta,\varepsilon)$ and $M(\alpha,\beta,\varepsilon)$ are Hermitian, $f(\alpha,\beta,\varepsilon)$ is real. Differentiating the linear system \gref{eq:linsys} with respect to $\alpha$ leads to
\beq
\label{eq:linsysalpha}
\mat{cc}K(\alpha,\beta,\varepsilon)&c\\ c^H&0\rix\mat{c}x_{\alpha}(\alpha,\beta,\varepsilon)\\ f_{\alpha}(\alpha,\beta,\varepsilon)\rix=\mat{c}v(\alpha,\beta,\varepsilon)\\u(\alpha,\beta,\varepsilon)\\ 0\rix,
\eeq
and the first row gives
\beq
\label{eq:Halpha}
K(\alpha,\beta,\varepsilon)x_{\alpha}(\alpha,\beta,\varepsilon) + cf_{\alpha}(\alpha,\beta,\varepsilon) = \mat{c}v(\alpha,\beta,\varepsilon)\\u(\alpha,\beta,\varepsilon)\rix.
\eeq
Multiplying this equation evaluated at $(\alpha^*,\beta^*,\varepsilon^*)$ from the left by the eigenvector ${x^*}^H$ of $K(\alpha^*,\beta^*,\varepsilon^*)$ gives
\[
f_{\alpha}(\alpha^*,\beta^*,\varepsilon^*) =\mat{cc} {u^*}^H & {v^*}^H\rix\mat{c}v^*\\u^*\rix = {u^*}^H v^*+{v^*}^H u^* = 2\text{Re}({u^*}^H v^*),
\]
where we have used ${x^*}^H c = 1$ from \gref{eq:linsys}. Similarly differentiating the linear system \gref{eq:linsys} with respect to $\beta$ gives
\beq
\label{eq:linsysbeta}
\mat{cc}K(\alpha,\beta,\varepsilon)&c\\ c^H&0\rix\mat{c}x_{\beta}(\alpha,\beta,\varepsilon)\\ f_{\beta}(\alpha,\beta,\varepsilon)\rix=i\mat{c}v(\alpha,\beta,\varepsilon)\\-u(\alpha,\beta,\varepsilon)\\ 0\rix.
\eeq
Again, evaluating at $(\alpha^*,\beta^*,\varepsilon^*)$ and multiplying by  ${x^*}^H$ from the left leads to
\[
f_{\beta}(\alpha^*,\beta^*,\varepsilon^*)=i\mat{cc}  {u^*}^H & {v^*}^H\rix\mat{c}v^*\\-u^*\rix=i({u^*}^H v^*-{v^*}^H u^*)= -2\text{Im}({u^*}^H v^*).
\]
Clearly
\[
f_{\alpha}(\alpha^*,\beta^*,\varepsilon^*)=0\quad\text{and}\quad f_{\beta}(\alpha^*,\beta^*,\varepsilon^*)=0\quad \Longleftrightarrow\quad {u^*}^H v^* = 0.
\]

Hence, we have reduced the problem of finding a solution to $\text{det}\,K(\alpha^*,\beta^*,\varepsilon^*)=0$ with ${u^*}^H v^* = 0$, to that of solving $g(\alpha,\beta,\varepsilon)=0$, where
\beq
\label{eq:fung}
g(\alpha,\beta,\varepsilon) = \mat{c}f(\alpha,\beta,\varepsilon)\\ f_{\alpha}(\alpha,\beta,\varepsilon)\\ f_{\beta}(\alpha,\beta,\varepsilon)\rix,
\eeq
which is three real nonlinear equations in three real unknowns. In the next section we describe the solution procedure using Newton's method.

\section{Newton's method applied to $g(\alpha,\beta,\varepsilon)=0$}
\label{sec:newton}

In this section we describe how to implement Newton's method for the nonlinear system $g(\alpha,\beta,\varepsilon)=0$. We also obtain a nondegeneracy condition that ensures nonsingularity of the Jacobian matrix of $g$ at the root, and hence confirms that Newton's method converges quadratically for a close enough starting guess. The nondegeneracy condition is shown to be equivalent to one introduced by Lippert and Edelman \cite{LippertEdelman99} for the conditioning of the $2$-dimensional Jordan block of  $B = A-\varepsilon uv^H$.

Newton's method applied to $g(\alpha,\beta,\varepsilon)$ is given by
\beq
\label{eq:newton}
G(\alpha\si,\beta\si,\varepsilon\si)\mat{c}\Delta\alpha\si\\\Delta\beta\si\\\Delta\varepsilon\si\rix = -g((\alpha\si,\beta\si,\varepsilon\si),
\eeq
where $\alpha\sip = \alpha\si+ \Delta\alpha\si$, $\beta\sip = \beta\si+ \Delta\beta\si$ and $\varepsilon\sip= \varepsilon\si+\Delta\varepsilon\si$, for $i=0,1,2\ldots$ until convergence, with a starting guess $(\alpha^{(0)},\beta^{(0)},\varepsilon^{(0)})$, where the Jacobian is 
\beq
\label{eq:jac}
G(\alpha\si,\beta\si,\varepsilon\si)=\mat{ccc}f_{\alpha}\si &f_{\beta}\si &f_{\varepsilon}\si \\f_{\alpha\alpha}\si &f_{\alpha\beta}\si &f_{\alpha\varepsilon}\si \\f_{\beta\alpha}\si &f_{\beta\beta}\si &f_{\beta\varepsilon}\si \rix,
\eeq
and all the matrix entries are evaluated at $(\alpha\si,\beta\si,\varepsilon\si)$. The values of $f\si$, $f_{\alpha}\si$ and $f_{\beta}\si$ are found using \gref{eq:linsys}, \gref{eq:linsysalpha} and \gref{eq:linsysbeta}. For the remaining values we differentiate \gref{eq:linsys}, \gref{eq:linsysalpha} and \gref{eq:linsysbeta} with respect to $\varepsilon$, that is,
\beq
\label{eq:linsysepsilon}
\mat{cc}K(\alpha,\beta,\varepsilon)&c\\ c^H&0\rix\mat{c}x_{\varepsilon}(\alpha,\beta,\varepsilon)\\ f_{\varepsilon}(\alpha,\beta,\varepsilon)\rix=\mat{c}x(\alpha,\beta,\varepsilon)\\ 0\rix,
\eeq
\beq
\label{eq:linsysalphaepsilon}
\mat{cc}K(\alpha,\beta,\varepsilon)&c\\ c^H&0\rix\mat{c}x_{\alpha\varepsilon}(\alpha,\beta,\varepsilon)\\ f_{\alpha\varepsilon}(\alpha,\beta,\varepsilon)\rix=\mat{c}v_{\varepsilon}(\alpha,\beta,\varepsilon)+ u_{\alpha}(\alpha,\beta,\varepsilon)\\u_{\varepsilon}(\alpha,\beta,\varepsilon)+ v_{\alpha}(\alpha,\beta,\varepsilon) \\0\rix,
\eeq
and
\beq
\label{eq:linsysbetaepsilon}
\mat{cc}K(\alpha,\beta,\varepsilon)&c\\ c^H&0\rix\mat{c}x_{\beta\varepsilon}(\alpha,\beta,\varepsilon)\\ f_{\beta\varepsilon}(\alpha,\beta,\varepsilon)\rix=\mat{c}iv_{\varepsilon}(\alpha,\beta,\varepsilon)+u_{\beta}(\alpha,\beta,\varepsilon)\\ -iu_{\varepsilon}(\alpha,\beta,\varepsilon)+v_{\beta}(\alpha,\beta,\varepsilon)\\0\rix
\eeq
in order to find $f_{\varepsilon}\si$, $f_{\alpha\varepsilon}\si$ and $f_{\beta\varepsilon}\si$. Furthermore, we differentiate \gref{eq:linsysalpha} with respect to $\alpha$ and $\beta$, that is 
\beq
\label{eq:linsysalphaalpha}
\mat{cc}K(\alpha,\beta,\varepsilon)&c\\ c^H&0\rix\mat{c}x_{\alpha\alpha}(\alpha,\beta,\varepsilon)\\ f_{\alpha\alpha}(\alpha,\beta,\varepsilon)\rix=2\mat{c}v_{\alpha}(\alpha,\beta,\varepsilon)\\u_{\alpha}(\alpha,\beta,\varepsilon)\\0\rix,
\eeq
and
\beq
\label{eq:linsysalphabeta}
\mat{cc}K(\alpha,\beta,\varepsilon)&c\\ c^H&0\rix\mat{c}x_{\alpha\beta}(\alpha,\beta,\varepsilon)\\ f_{\alpha\beta}(\alpha,\beta,\varepsilon)\rix=\mat{c}iv_{\alpha}(\alpha,\beta,\varepsilon)+v_{\beta}(\alpha,\beta,\varepsilon)\\-iu_{\alpha}(\alpha,\beta,\varepsilon)+u_{\beta}(\alpha,\beta,\varepsilon)\\0\rix,
\eeq
to compute $f_{\alpha\alpha}\si$ and $f_{\alpha\beta}\si = f_{\beta\alpha}\si$. Finally, differentiate \gref{eq:linsysbeta} with respect to $\beta$ to get 
\beq
\label{eq:linsysbetabeta}
\mat{cc}K(\alpha,\beta,\varepsilon)&c\\ c^H&0\rix\mat{c}x_{\beta\beta}(\alpha,\beta,\varepsilon)\\ f_{\beta\beta}(\alpha,\beta,\varepsilon)\rix=2i\mat{c}v_{\beta}(\alpha,\beta,\varepsilon)\\-u_{\beta}(\alpha,\beta,\varepsilon)\\0\rix,
\eeq
Therefore, in order to evaluate the components of $G(\alpha\si,\beta\si,\varepsilon\si)$ and $g(\alpha\si,\beta\si,\varepsilon\si)$ we only need to solve the linear systems above, which, importantly, all have the same Hermitian system matrix $M(\alpha\si,\beta\si,\varepsilon\si)$. Hence only one LU factorisation of $M(\alpha\si,\beta\si,\varepsilon\si)$ per iteration in Newton's method is required. Note that Newton's method itself is only carried out in three dimensions. Next, we state the Newton method algorithm for this problem.

\begin{alg}[Newton's method]
\label{alg:newton}
Given $(\alpha^{(0)},\beta^{(0)},\varepsilon^{(0)})$ and $c\in\Cn$ such that 
$M(\alpha^{(0)},\beta^{(0)},\varepsilon^{(0)})$ is nonsingular; set $i=0$:
\begin{enumerate}[(i)]
\item Solve \gref{eq:linsys} and \gref{eq:linsysalpha} and \gref{eq:linsysbeta} in order to evaluate
\[
g(\alpha\si,\beta\si,\varepsilon\si) = \mat{c}f(\alpha\si,\beta\si,\varepsilon\si)\\f_{\alpha}(\alpha\si,\beta\si,\varepsilon\si)\\f_{\beta}(\alpha\si,\beta\si,\varepsilon\si) \rix.
\]
\item Solve \gref{eq:linsysepsilon}, \gref{eq:linsysalphaalpha}, \gref{eq:linsysalphabeta}, \gref{eq:linsysbetabeta}, \gref{eq:linsysalphaepsilon} and \gref{eq:linsysbetaepsilon} in order to evaluate the Jacobian $G(\alpha\si,\beta\si,\varepsilon\si)$ given by \gref{eq:jac}.
\item Newton update: Solve \gref{eq:newton} in order to get $(\alpha\sip,\beta\sip,\varepsilon\sip)$
\item Repeat until convergence.
\end{enumerate}
\end{alg}

Finally we show, that provided a certain nondegeneracy condition holds, the Jacobian $G$ is nonsingular at the root. In the limit we have 
\beq
\label{eq:jaclimit}
G(\alpha^*,\beta^*,\varepsilon^*)=\mat{ccc}0 &0 &f_{\varepsilon}^* \\f_{\alpha\alpha}^* &f_{\alpha\beta}^* &f_{\alpha\varepsilon}^* \\f_{\beta\alpha}^* &f_{\beta\beta}^* &f_{\beta\varepsilon}^* \rix,
\eeq
since $f_{\alpha}^*= f_{\beta}^*=0$.
 
Multiplying the first row of \gref{eq:linsysepsilon} evaluated at $(\alpha^*,\beta^*,\varepsilon^*)$ from the left by ${x^*}^H$ gives
\[
f_{\varepsilon}(\alpha^*,\beta^*,\varepsilon^*) = {x^*}^H x^*> 0,\quad\text{since}\quad x^*\neq 0,
\]
(recall ${x^*}^H c = 1$ from \gref{eq:linsys}). Hence the Jacobian \gref{eq:jaclimit} is nonsingular if and only if 
\beq
\label{eq:Fstar}
F_{\alpha\beta}^*:=f_{\alpha\alpha}^*f_{\beta\beta}^*-{f_{\alpha\beta}^*}^2\neq 0.
\eeq 
With similar calculations as before we obtain
\beq
\label{eq:faafbb}
f_{\alpha\alpha}(\alpha^*,\beta^*,\varepsilon^*)  = 2{x^*}^H\mat{c}v_{\alpha}^*\\u_{\alpha}^*\rix,\quad f_{\beta\beta}(\alpha^*,\beta^*,\varepsilon^*)  = 2i{x^*}^H\mat{c}v_{\beta}^*\\-u_{\beta}^*\rix
\eeq
and
\beq
\label{eq:fab}
f_{\alpha\beta}(\alpha^*,\beta^*,\varepsilon^*)  = {x^*}^H\left(i\mat{c}v_{\alpha}^*\\-u_{\alpha}^*\rix+\mat{c}v_{\beta}^*\\u_{\beta}^*\rix\right).
\eeq
\begin{lemma}
\label{lem:F}
Under Assumption \ref{assum:0}, $F_{\alpha\beta}^*=f_{\alpha\alpha}^*f_{\beta\beta}^*-{f_{\alpha\beta}^*}^2\neq 0$.
\end{lemma}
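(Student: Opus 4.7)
The plan is to identify the $2\times 2$ Hessian matrix appearing in Assumption \ref{assum:0} with the Hessian, at $(\alpha^*,\beta^*)$, of $\varepsilon$ viewed implicitly as a smooth function of $(\alpha,\beta)$, and then to express its entries in terms of $f_{\alpha\alpha}^*$, $f_{\alpha\beta}^*$ and $f_{\beta\beta}^*$. Once this identification is in hand, nonvanishing of $F_{\alpha\beta}^*$ will follow immediately from nonsingularity of the Lippert--Edelman matrix.

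To set this up I would apply the implicit function theorem to $f(\alpha,\beta,\varepsilon)=0$ at $(\alpha^*,\beta^*,\varepsilon^*)$. Since $f_{\varepsilon}^* = {x^*}^H x^* > 0$ (derived just above \gref{eq:Fstar}), there is a smooth function $\widehat\varepsilon(\alpha,\beta)$ on a neighbourhood of $(\alpha^*,\beta^*)$ with $\widehat\varepsilon(\alpha^*,\beta^*) = \varepsilon^*$ and $f(\alpha,\beta,\widehat\varepsilon(\alpha,\beta)) \equiv 0$. Because the zero locus of $f$ coincides with the singular locus of $K(\alpha,\beta,\varepsilon)$, and because the smallest singular value of $A - (\alpha^*+i\beta^*)I$ is simple by the hypothesis of Theorem \ref{th:def}, this $\widehat\varepsilon$ is precisely the smooth local branch of $\sigma_{\min}(A-(\alpha+i\beta)I)$---that is, exactly the $\varepsilon$ whose second partial derivatives enter Assumption \ref{assum:0}. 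This matching of $\widehat\varepsilon$ with the Lippert--Edelman $\varepsilon$ is the one step that requires genuine thought; everything else is mechanical.

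I would then compute the required derivatives by implicit differentiation of $f(\alpha,\beta,\widehat\varepsilon(\alpha,\beta))=0$. First-order differentiation gives $\widehat\varepsilon_{\alpha} = -f_{\alpha}/f_{\varepsilon}$ and $\widehat\varepsilon_{\beta} = -f_{\beta}/f_{\varepsilon}$, which both vanish at $(\alpha^*,\beta^*)$ since $f_{\alpha}^* = f_{\beta}^* = 0$. A second differentiation, evaluated at the critical point so that all terms containing $\widehat\varepsilon_{\alpha}$ or $\widehat\varepsilon_{\beta}$ drop out, yields
\[
\widehat\varepsilon_{\alpha\alpha}(\alpha^*,\beta^*) = -\frac{f_{\alpha\alpha}^*}{f_{\varepsilon}^*},\qquad
\widehat\varepsilon_{\alpha\beta}(\alpha^*,\beta^*) = -\frac{f_{\alpha\beta}^*}{f_{\varepsilon}^*},\qquad
\widehat\varepsilon_{\beta\beta}(\alpha^*,\beta^*) = -\frac{f_{\beta\beta}^*}{f_{\varepsilon}^*}.
\]
Taking the $2\times 2$ determinant of this Hessian produces $F_{\alpha\beta}^*/(f_{\varepsilon}^*)^2$. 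By Assumption \ref{assum:0} the Hessian is well-conditioned and hence nonsingular, so its determinant is nonzero; since $f_{\varepsilon}^* > 0$, we conclude $F_{\alpha\beta}^* \neq 0$.
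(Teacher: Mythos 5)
Your proposal is correct, and it reaches the paper's key identity by a different (and arguably slicker) mechanical route. The paper works at the level of the vector equation: using Sun's smoothness result for simple singular values, it differentiates $K(\alpha,\beta,\varepsilon(\alpha,\beta))\,x(\alpha,\beta)=0$ twice in $\alpha$ and $\beta$, left-multiplies by ${x^*}^H$, and compares the result with the explicit expressions \gref{eq:faafbb}--\gref{eq:fab} for $f_{\alpha\alpha}^*$, $f_{\beta\beta}^*$, $f_{\alpha\beta}^*$ obtained from the bordered systems; this yields $f_{\alpha\alpha}^*=-({x^*}^Hx^*)\varepsilon_{\alpha\alpha}^*$ and its companions, hence $F_{\alpha\beta}^*=({x^*}^Hx^*)^2\det(E)\neq 0$. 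You instead stay entirely at the scalar level: the implicit function theorem applied to $f=0$ (legitimate since $f_\varepsilon^*={x^*}^Hx^*>0$) plus two rounds of implicit differentiation give $\widehat\varepsilon_{\alpha\alpha}^*=-f_{\alpha\alpha}^*/f_\varepsilon^*$ etc., which is the same proportionality with the same constant. You correctly flag the one nontrivial step, the identification of the IFT branch $\widehat\varepsilon$ with the Lippert--Edelman singular-value function: this holds because the graph of $\sigma_{\min}(A-(\alpha+i\beta)I)$ lies in the zero set of $f$ (equivalently of $\det K$), passes through $(\alpha^*,\beta^*,\varepsilon^*)$, and by simplicity of $\varepsilon^*$ is the only branch of that zero set near the point, so IFT uniqueness forces the two to coincide. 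A small bonus of your route is that it needs neither Sun's smoothness theorem nor the formulas \gref{eq:faafbb}--\gref{eq:fab}; the cost is that the paper's computation also exhibits the eigenvector derivatives explicitly, which it reuses elsewhere. You do rely on $f_\alpha^*=f_\beta^*=0$ to kill the cross terms in the second implicit differentiation, but that is exactly the condition ${u^*}^Hv^*=0$ already established at the root of $g$, so no gap there.
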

\begin{proof}
If $\varepsilon$ is a simple singular value of $(A-(\alpha+\beta i)I)$, $\alpha,\beta\in\R$, so that 
\[
(A-(\alpha+\beta i)I)v = \varepsilon u,\quad (A-(\alpha+\beta i)I)^H u = \varepsilon v,
\] 
then (see Sun \cite{Sun88}) $\varepsilon$, $u$ and $v$ are smooth functions of $\alpha$ and $\beta$. Furthermore, Lippert and Edelman \cite[Theorem 3.1]{LippertEdelman99} show that if ${u^*}^H v^*=0$ then $\varepsilon_{\alpha}^*: = \varepsilon_{\alpha}(\alpha^*,\beta^*)=0$, $\varepsilon_{\beta}^*: = \varepsilon_{\beta}(\alpha^*,\beta^*)=0$ and $B=A-\varepsilon u^*{v^*}^H$ has a $2$-dimensional Jordan block. In addition, the ill-conditioning of the matrix $B$ is determined by the ill-conditioning of $E = \displaystyle\mat{cc} \varepsilon_{\alpha\alpha}^*& \varepsilon_{\alpha\beta}^*\\ \varepsilon_{\alpha\beta}^*& \varepsilon_{\beta\beta}^*\rix$, see \cite[Corollary 5.2]{LippertEdelman99}. Under Assumption \ref{assum:0} we have $\text{det}(E)\neq 0$. Recall \gref{eq:full} and \gref{eq:Habe}
where $\varepsilon=\varepsilon(\alpha,\beta)$, $v = v(\alpha,\beta)$, $u = u(\alpha,\beta)$ and $x=\DS\mat{c}u\\v\rix$. Taking the second derivatives with respect to $\alpha$ and $\beta$ and evaluating them at the root so that $\varepsilon_{\alpha}^*(\alpha,\beta)=\varepsilon_{\beta}^*(\alpha,\beta)=0$ we obtain
\[
K(\alpha^*,\beta^*,\varepsilon^*) x_{\alpha\alpha}^* - 2\mat{c}v_{\alpha}^*\\ u_{\alpha}^*\rix = \varepsilon_{\alpha\alpha}^*x^*,
\]
\[
K(\alpha^*,\beta^*,\varepsilon^*) x_{\beta\beta}^* + 2i\mat{c}-v_{\beta}^*\\ u_{\beta}^*\rix = \varepsilon_{\beta\beta}^*x^*,
\]
and
\[
K(\alpha^*,\beta^*,\varepsilon^*) x_{\alpha\beta}^* +\mat{c}-iv_{\alpha}^*-v_{\beta}^*\\ iu_{\alpha}^*-u_{\beta}^*\rix = \varepsilon_{\alpha\beta}^*x^*.
\]
Multiplying those three equations by the eigenvector ${x^*}^H$ of $K(\alpha^*,\beta^*,\varepsilon^*)$ from the left we obtain that
\[
f_{\alpha\alpha}^*=-({x^*}^H x^*)\varepsilon_{\alpha\alpha}^*,\quad f_{\beta\beta}^*=-({x^*}^H x^*)\varepsilon_{\beta\beta}^*\quad\text{and}\quad f_{\alpha\beta}^*=-({x^*}^H x^*)\varepsilon_{\alpha\beta}^*,
\]
where we have used \gref{eq:faafbb} and \gref{eq:fab}. Hence $F_{\alpha\beta}^*=f_{\alpha\alpha}^*f_{\beta\beta}^*-{f_{\alpha\beta}^*}^2\neq 0$ since $\text{det}(E)\neq 0$.
\end{proof}

\noindent In summary, Lemma \ref{lem:F} shows that when the defective matrix $B = A-\varepsilon uv^H$ is well-conditioned Algorithm \ref{alg:newton} should exhibit quadratic convergence for a close enough starting guess.

\begin{rem}
We note that $z = \alpha+\beta i$ is a saddle point of $f(\alpha,\beta)$ and hence $F_{\alpha\beta}^*=f_{\alpha\alpha}^*f_{\beta\beta}^*-{f_{\alpha\beta}^*}^2<0$. This property can in fact be checked and is observed in all the computational examples in Section \ref{sec:ex} (see last column of Tables \ref{tab:kahan6}-\ref{tab:grcar20}).  
\end{rem}

We would like to note that our algorithm depends on the starting guess and hence does not guarantee convergence to the nearest defective matrix but only to a nearby one. However, all the algorithms currently known in the literature only find nearby defective matrices (see, in particular the methods suggested in \cite{ABBO09}).

We would also like to point out some computational advantages of our method. Both the method in \cite{ABBO09} and our method provide a Newton method for finding a saddle point of $\sigma$ (in \cite{ABBO09} and \cite{LippertEdelman99}) or $f$. For our problem the derivatives of $f$ are particularly easy and simple to calculate. For any derivative a system with the same bordered Hermitian matrix \gref{eq:M} has to be solved - and we can get any 1st, 2nd or higher order derivatives by solving with the same matrix. Hence one matrix factorisation with costs of usually $\frac{2}{3}(2n+1)^3 \approx \frac{16}{3}n^3$ or less for sparse systems is sufficient. Other explicit methods for calculating first and second derivatives have been derived (see \cite{ABBO09} and \cite{LippertEdelman99}), which usually require a full SVD to be carried out, costing $21 n^3$ operations (see \cite{GolubvanLoan96}). Hence for large problems the implicit determinant method is more efficient. We show an example in the next section.

\subsection{Numerical examples}
\label{sec:ex}

We now illustrate the numerical performance of our method with two examples which are taken from \cite{AlamBora2005}. As has been mentioned earlier, since our method is based on Newton's method it finds a nearby defective matrix. We cannot guarantee it finds the closest defective matrix (this will depend on the starting guesses we use). However, in all cases considered here our method found the nearest defective matrix according to Alam and Bora \cite{AlamBora2005}.

\begin{exmp} 
\label{ex:Kahan}
Let $A\in\Cnn$ be the Kahan matrix \cite{TrefEmbree05}, which is given by
\[
A = \mat{ccccc}1&-c&-c&-c&-c\\ &s&-sc&-sc&-sc\\ &&s^2&-s^2 c& -s^2 c\\ &&&\ddots&\vdots\\ &&&&s^{n-1}\rix,
\]
where $s^{n-1}=0.1$ and $s^2+c^2=1$. We consider this matrix for $n=6,15,20$. As initial guesses we choose $\beta^{(0)} = 0$ and $\alpha^{(0)}= 0$ for $n=6$, $\alpha^{(0)}= 0.12$ for $n=15$ and $\alpha^{(0)}= 0.115$ for $n=20$. Further $\varepsilon^{(0)}=\sigma_{\min}$, $u^{(0)} = u_{\min}$ and $v^{(0)} = v_{\min}$, where $\sigma_{\min}$ is the minimum singular value of $A$ with corresponding left and right singular vectors $u_{\min}$ and $v_{\min}$. $x^{(0)}$ is determined from \gref{eq:xy} and  $c=x^{(0)}$. We stop the iteration once 
\[
\|g(\alpha\si,\beta\si,\varepsilon\si)\|<\tau,\quad\text{where}\quad\tau = 10^{-14}.
\]
\end{exmp}

\begin{table}[h!]
\begin{center}
\caption{Results for Example \ref{ex:Kahan}, $n = 6$.}
\vspace{0.1cm}
\footnotesize{
\begin{tabular}{c|c|c|c|c|c}
\hline
\hline
$i$&$\alpha\si$&$\beta\si$&$\varepsilon\si$&$\|g(\alpha\si,\beta\si,\varepsilon\si)\|$&$F_{\alpha\beta}\si$\\
\hline
0 &          0     &       0  & 9.9694e-03  &          -   &         -\\
1 &  1.3643e-01    &        0 &  1.2145e-02 &  8.1049e-02  & 3.9318e-01\\
2 &  1.3319e-01    &        0 &  7.1339e-04 &  3.9165e-02  &-1.0032e+00\\
3 &  1.2767e-01    &        0 &  4.9351e-04 &  4.3976e-03  &-4.5529e-01\\
4 &  1.2763e-01    &        0 &  4.7049e-04 &  8.2870e-05  &-4.3191e-01\\
5 &  1.2763e-01    &        0 &  4.7049e-04 &  4.7344e-08  &-4.3136e-01\\
6 &  1.2763e-01    &        0 &  4.7049e-04 &  5.3655e-15  &-4.3136e-01\\
\hline
\end{tabular}
}
\label{tab:kahan6}
\end{center}
\end{table}

Table \ref{tab:kahan6} shows the results for $n=6$. In this case the eigenvalues $1.5849\times 10^{-1}$ and $10^{-1}$ coalesce at $1.2763\times 10^{-1}$ for a value of $\varepsilon = 4.7049\times 10^{-4}$. The last column of Table \ref{tab:kahan6} shows the value of $F_{\alpha\beta}\si=f_{\alpha\alpha}\si f_{\beta\beta}\si-{f_{\alpha\beta}\si}^2$ (given by \gref{eq:Fstar}) and we see that the final value $F_{\alpha\beta}^*\neq 0$ at the root. The quadratic convergence rate is clearly observed.

\begin{table}[h!]
\begin{center}
\caption{Results for Example \ref{ex:Kahan}, $n = 15$.}
\vspace{0.1cm}
\footnotesize{
\begin{tabular}{c|c|c|c|c|c}
\hline
\hline
$i$&$\alpha\si$&$\beta\si$&$\varepsilon\si$&$\|g(\alpha\si,\beta\si,\varepsilon\si)\|$&$F_{\alpha\beta}\si$\\
\hline
0 &  1.2000e-01    &        0  & 4.7454e-04 &           - &           -\\
1 &  1.2042e-01    &        0  & 2.1767e-06 &  3.9203e-03 & -6.1848e-03\\
2 &  1.3116e-01    &        0  & 1.0065e-06 &  5.6943e-05 &  5.6071e-06\\
3 &  1.2833e-01    &        0  & 4.9786e-07 &  2.8915e-05 & -6.7015e-05\\
4 &  1.2865e-01    &        0  & 4.4839e-07 &  1.6066e-06 & -5.9016e-05\\
5 &  1.2865e-01    &        0  & 4.4850e-07 &  1.7737e-08 & -6.1975e-05\\
6 &  1.2865e-01    &        0  & 4.4850e-07 &  1.9014e-12 & -6.1957e-05\\
7 &  1.2865e-01    &        0  & 4.4850e-07 &  3.5480e-18 & -6.1957e-05\\
\hline
\end{tabular}
}
\label{tab:kahan15}
\end{center}
\end{table}
Table \ref{tab:kahan15} shows the results for $n=15$. In this case the eigenvalues $1.1788\times 10^{-1}$ and $1.3895\times 10^{-1}$ coalesce at $1.2865\times 10^{-1}$ for a value of $\varepsilon = 4.4850e-07\times 10^{-7}$.

\begin{table}[h!]
\begin{center}
\caption{Results for Example \ref{ex:Kahan}, $n = 20$.}
\vspace{0.1cm}
\footnotesize{
\begin{tabular}{c|c|c|c|c|c}
\hline
\hline
$i$&$\alpha\si$&$\beta\si$&$\varepsilon\si$&$\|g(\alpha\si,\beta\si,\varepsilon\si)\|$&$F_{\alpha\beta}\si$\\
\hline
0 &  1.1500e-01   &         0 &  1.3141e-04 &          -  &          -\\
1 &  1.1507e-01   &         0 &  1.1315e-07 &  1.2702e-03 & -7.9071e-04\\
2 &  1.2010e-01   &         0 &  3.2008e-08 &  3.4299e-06 & -5.8539e-09\\
3 &  1.1997e-01   &         0 &  1.8878e-08 &  2.8840e-07 & -4.3105e-07\\
4 &  1.2000e-01   &         0 &  1.9049e-08 &  2.2944e-08 & -4.6343e-07\\
5 &  1.2000e-01   &         0 &  1.9049e-08 &  7.3704e-13 & -4.6360e-07\\
6 &  1.2000e-01   &         0 &  1.9049e-08 &  2.1281e-17 & -4.6360e-07\\
\hline
\end{tabular}
}
\label{tab:kahan20}
\end{center}
\end{table}
Table \ref{tab:kahan20} shows the results for $n=20$. In this case the eigenvalues $1.1288\times 10^{-1}$ and $1.2743\times 10^{-1}$ coalesce at $1.2\times 10^{-1}$ for a value of $\varepsilon = 1.9049\times 10^{-8}$.

From the last columns in Tables \ref{tab:kahan6}-\ref{tab:kahan20} we see that the value of $F_{\alpha\beta}\si$ becomes smaller the larger the size of the Kahan matrix. This means the matrix $B(\varepsilon) = A-\varepsilon uv^H$ becomes increasingly ill-conditioned as $n$ increases. We also observe a corresponding deterioration in the rate of convergence of Newton's method as the value of $F_{\alpha\beta}\si$ becomes smaller, which is consistent with the theory. 

\begin{exmp} 
\label{ex:grcar}
Let $A\in\Cnn$ be the Grcar matrix taken from the Matlab gallery \verb|A = gallery('grcar',n)|, where $n=6,20$. The eigenvalues of $A$ appear in complex conjugate pairs and hence in this case two pairs of complex eigenvalues of $A$ coalesce at two boundary points of the pseudospectrum.

As initial guess for $n=6$ we take $\alpha^{(0)}= 0$, $\beta^{(0)} = -1$, $\varepsilon^{(0)}=0$, $u^{(0)} = u_{\min}$ and $v^{(0)} = v_{\min}$, where $u_{\min}$ and $v_{\min}$ are left and right singular vectors of $A-\beta^{(0)}iI$, corresponding to the smallest singular value. $x^{(0)}$ is determined from \gref{eq:xy}. The stopping condition is the same as in Example \ref{ex:Kahan}. For $n=20$ we take $\beta^{(0)} = -2.5$, the initial guesses for the remaining values are determined similarly. Furthermore $c = x^{(0)}$.
\end{exmp}

\begin{table}[h!]
\begin{center}
\caption{Results for Example \ref{ex:grcar}, $n = 6$.}
\vspace{0.1cm}
\footnotesize{
\begin{tabular}{c|c|c|c|c|c}
\hline
\hline
$i$&$\alpha\si$&$\beta\si$&$\varepsilon\si$&$\|g(\alpha\si,\beta\si,\varepsilon\si)\|$&$F_{\alpha\beta}\si$\\
\hline
0&            0& -1.0000e+00  &          0  &          -&-\\
1&   1.2141e+00&  -2.3756e+00 & 7.4297e-01 &  5.0533e-01&1.4186e-01\\
2&   1.1159e+00&  -1.4291e+00 &  9.5425e-02 &  2.2193e+01&-2.7279e+04\\
3&   1.0512e+00&  -1.9848e+00 &  4.3767e-01 &  5.2914e-01&-5.0768e+00\\
4&   8.0543e-01&  -1.5940e+00 &  1.4858e-01 &  4.1255e-01&-1.1717e+00\\
5&   7.5742e-01&  -1.5944e+00 &  2.1279e-01 &  8.6847e-02&-1.1323e+00\\
6&   7.5335e-01&  -1.5912e+00 &  2.1516e-01 &  5.5621e-03&-9.7810e-01\\
7&   7.5332e-01&  -1.5912e+00 &  2.1519e-01 &  4.2790e-05&-9.6333e-01\\
8&   7.5332e-01&  -1.5912e+00 &  2.1519e-01 &  2.4851e-09&-9.6323e-01\\
9&   7.5332e-01&  -1.5912e+00 &  2.1519e-01 &  1.5798e-16&-9.6323e-01\\
\hline
\end{tabular}
}
\label{tab:grcar6}
\end{center}
\end{table}
Table \ref{tab:grcar6} shows the results for $n=6$. The eigenvalue pairs $1.1391\pm 1.2303i$ and $3.5849\times 10^{-1}\pm 1.9501i$ coalesce at $7.5332\times 10^{-1}\pm 1.5912i$ for a value of $\varepsilon = 2.1519\times 10^{-1}$.

\begin{table}[h!]
\begin{center}
\caption{Results for Example \ref{ex:grcar}, $n = 20$.}
\vspace{0.1cm}
\footnotesize{
\begin{tabular}{c|c|c|c|c|c}
\hline
\hline
$i$&$\alpha\si$&$\beta\si$&$\varepsilon\si$&$\|g(\alpha\si,\beta\si,\varepsilon\si)\|$&$F_{\alpha\beta}\si$\\
\hline
0 &           0 & -2.5000e+00 &           0 &           0&0\\
1 &  9.5854e-02 & -2.3299e+00 & 1.7989e-02 &  1.3806e-01&9.9103e-01\\
2 &  1.3904e-01 & -2.2465e+00 & 1.3564e-03 &  3.2308e-02&-2.3623e-01\\
3 &  1.6141e-01 & -2.2042e+00 & 7.2914e-04 &  1.1930e-02&-1.5963e-01\\
4 &  1.5554e-01 & -2.1818e+00 & 4.5435e-04 &  3.4851e-03&-2.7982e-02\\
5 &  1.5338e-01 & -2.1815e+00 & 4.9060e-04 &  3.4265e-04&-2.4693e-02\\
6 &  1.5331e-01 & -2.1817e+00 & 4.9141e-04 &  2.3240e-05&-2.3956e-02\\
7 &  1.5331e-01 & -2.1817e+00 & 4.9141e-04 &  1.6942e-08&-2.4012e-02\\
8 &  1.5331e-01 & -2.1817e+00 & 4.9141e-04 &  4.6672e-14&-2.4012e-02\\
9 &  1.5331e-01 & -2.1817e+00 & 4.9141e-04 &  4.5263e-17&-2.4012e-02\\
\hline
\end{tabular}
}
\label{tab:grcar20}
\end{center}
\end{table}
Table \ref{tab:grcar20} shows the results for $n=20$. The eigenvalue pairs $1.0802\times 10^{-1}\pm 2.2253i$ and $2.1882\times 10^{-1}\pm 2.1132i$ coalesce at $1.5331\times 10^{-1}\pm 2.1817i$ for a value of $\varepsilon = 4.9141\times 10^{-4}$.

The last columns in Tables \ref{tab:grcar6}-\ref{tab:grcar20} show the values of $F_{\alpha\beta}\si$ which converge to values away from zero. The latter iterates illustrate almost quadratic convergence. Note that in this example $\beta\neq 0$, so $z = \alpha+\beta i$ is complex, though this makes no difference to the numerical method.

We finally give a comparison of the method in \cite{ABBO09} (see also \cite{LippertEdelman99}) to our method in terms of computational cost. Note that both the implicit determinant method - as all other methods known so far - only compute a \emph{nearby} defective matrix.
\begin{exmp} 
\label{ex:comparison}
Consider an $n\times n$ matrix with $n=1000$, which is an identity matrix apart from the upper left $6\times 6$ block which is the Kahan matrix from Example \ref{ex:Kahan}. As initial guess we take the estimate which was used in \cite{ABBO09}.
\end{exmp}

\begin{table}[h!]
\caption{Results for Example \ref{ex:comparison}, $n = 1000$, Implicit determinant method (left) and method in \cite{ABBO09} (right).}
\vspace{0.1cm}
\begin{minipage}[b]{0.5\linewidth}\centering
\footnotesize{
\begin{tabular}{c|c|c|c}
\hline
\hline
$i$&$\varepsilon\si$&$\alpha\si$&$\|g(\lambda\si,\varepsilon\si)\|$\\
\hline
0 &  4.6081e-04&   1.3175e-01  &            -\\
1 &  4.8049e-04&   1.2753e-01  &   2.3311e-03\\
2 &  4.7050e-04&   1.2763e-01  &   5.9278e-05\\
3 &  4.7049e-04&   1.2763e-01  &   5.7187e-08\\
4 &  4.7049e-04&   1.2763e-01  &   4.4987e-14\\
\hline
\end{tabular}
}
\end{minipage}
\hspace{0.2cm}
\begin{minipage}[b]{0.5\linewidth}\centering
\footnotesize{
\begin{tabular}{c|c|c|c}
\hline
\hline
$i$&$\varepsilon\si$&$\alpha\si$&$\|g(\lambda\si,\varepsilon\si)\|$\\
\hline
0&   4.6081e-04 &  1.3175e-01&   4.6623e-03\\
1&   4.7049e-04 &  1.2753e-01&   1.1568e-04\\
2&   4.7049e-04 &  1.2763e-01&   5.6904e-08\\
3&   4.7049e-04 &  1.2763e-01&   1.3769e-14\\
\hline
\end{tabular}
}
\end{minipage}
\label{tab:comp}
\end{table}
Table \ref{tab:comp} shows the results for this comparison. We see that both the method from \cite{ABBO09} and our new method exhibit very fast quadratic convergence to the desired nearby (in this case nearest) defective matrix (cf Table \ref{tab:kahan6}). However, the CPU times are very different. Whereas the method in \cite{ABBO09} (right Table) requires a CPU time of $24.3s$ the Implicit Determinant method only needs $5.4s$.

In summary, we note that both the method in \cite{ABBO09} and the method described in this paper do not guarantee convergence to the nearest defective matrix. For large problems the Implicit Determinant method seems to be faster, as it is not necessary to compute the full SVD at each step.  

We note that we also compared the method described in \cite{ABBO09} with the method described in this paper for Example \ref{ex:grcar}. For this problem it is particularly hard to find good starting values in order for both methods to converge. If we generate the starting guesses as described in \cite{ABBO09} we found that both methods stagnate or diverge - as for a small singular value $\varepsilon$ the Hessian becomes increasingly ill-conditioned. If we start with the starting guess described in Example \ref{ex:grcar} we found the Implicit Determinant method to converge (see Tables \ref{tab:grcar6} and \ref{tab:grcar20}) but the method described in \cite{ABBO09} not to be defined as the second derivatives of the singular values are undefined for $\varepsilon^{(0)}=0$. However, to give a fair comparison, the method described in \cite{ABBO09} describes a variant of Newton's method for the computation of a nearby defective matrix that is applicable to both generic and non-generic cases, whereas the method described in this paper only deals with the generic case, that is the computed singular value is assumed to be simple.

\section{Final remarks}

We have developed a new algorithm for computing a nearby defective matrix. Numerical examples show that this new technique performs well and gives quadratic convergence in the generic cases. 

Also, since the method is only Newton's method on a real $3$-dimensional nonlinear system (with only one LU factorisation required at each step) it is simple to apply and is significantly faster than the technique in \cite{AlamBora2005}.

However, as has already been mentioned, since it is based on Newton's method, convergence to the nearest defective matrix cannot be guaranteed, though in fact, in all the examples considered, convergence to the nearest defective matrix was achieved. Of course, a more sophisticated nonlinear solver, e.g. global Newton's method or a global minimiser, could be applied to \gref{eq:fung} if required.

Though our algorithm is designed to compute a nearby defective matrix in the
generic case (that is, there is a well-conditioned 2-dimensional Jordan
block) it has two features that enable it to recognise when the conditions
of Assumption 1.2 fail. First, if there is another singular value near
$\epsilon$ then the condition number of $M$ will be large. Second, if the
condition number of $M$ is small, but $F_{\alpha\beta}$ is close to zero at
the root, then  this indicates the presence of a nearby matrix with a Jordan
block of dimension greater than $2$. As such the algorithm in this paper
could be used to provide starting values for an alternative algorithm that could detect
a higher order singularity.

\bibliographystyle{siam}

\end{document}